\documentclass[11pt,reqno,a4paper]{amsart}

\usepackage{amsmath,amsthm,amssymb,mathtools,bbm}
\usepackage{esint}
\usepackage{bookmark}
\usepackage{hyperref}
\hypersetup{pdfstartview={FitH}}

\theoremstyle{plain}
\newtheorem{theorem}{Theorem}
\newtheorem{proposition}[theorem]{Proposition}
\newtheorem{lemma}[theorem]{Lemma}

\newtheorem*{problem}{Problem}

\numberwithin{equation}{section}

\renewcommand{\le}{\leqslant}

\renewcommand{\ge}{\geqslant}

\newcommand{\Z}{\mathbb{Z}}
\newcommand{\R}{\mathbb{R}}
\newcommand{\N}{\mathbb{N}}
\newcommand{\Q}{\mathbb{Q}}
\newcommand{\calB}{\mathcal{B}}
\newcommand{\Ball}[2]{\textup{B}_{#1}(#2)}
\newcommand{\dd}{\,\textup{d}}
\newcommand{\Dil}{\textup{Dil}}
\newcommand{\PP}{\textup{P}}

\begin{document}

\title{A Banach space that distinguishes two maximal operators}

\author[Vjekoslav Kova\v{c}]{Vjekoslav Kova\v{c}}
\address{Department of Mathematics, Faculty of Science, University of Zagreb, Bijeni\v{c}ka cesta 30, 10000 Zagreb, Croatia}
\email{vjekovac@math.hr}

\subjclass[2020]{Primary 42B25; 
Secondary 42B35} 

\keywords{maximal function, Banach space, Besov space}

\begin{abstract}
Maz'ya and Shaposhnikova introduced a non-classical maximal operator $\textup{M}^{\diamond}$ as the maximal convolution with the vector-valued signum kernel truncated to centered balls. We construct a translation-invariant Banach space of locally integrable functions on which $\textup{M}^{\diamond}$ is bounded, but the sharp maximal operator $\textup{M}^{\sharp}$ is not. This answers one of Maz'ya's questions from a collection of $75$ open problems in analysis.
\end{abstract}

\maketitle



\section{Introduction}

\subsection{The problem and its background}
Maz'ya and Shaposhnikova \cite{MazyaShaposhnikova2002} introduced the following maximal operator appearing naturally in the context of certain sharp pointwise interpolation inequalities for derivatives of higher-dimensional functions:
\begin{equation}\label{eq:Mdiamond-def}
(\textup{M}^{\diamond}f)(x) :=
\sup_{r>0} \Big\lvert \fint_{\Ball{r}{x}}\frac{y-x}{|y-x|}f(y)\dd y \Big\rvert, \quad x\in\R^n.
\end{equation}
It is the maximal convolution with dilates of the vector-valued kernel
\begin{equation}\label{eq:Mdiamond-kernel} 
K^{\diamond}(x) :=
\begin{cases} 
\displaystyle\frac{1}{|\Ball{1}{0}|}\frac{x}{|x|} & \text{if } 0<|x|\le1,\\
0 & \text{otherwise}.
\end{cases} 
\end{equation}
Maz'ya also discussed \eqref{eq:Mdiamond-def} at the very beginning of Chapter 12 of the now classical book on Sobolev spaces \cite{Mazya2011}. More recently, in the published problem list \cite{Mazya2018}, Maz'ya asked about the following distinction between $\textup{M}^{\diamond}$ and the classical Fefferman--Stein sharp maximal operator:
\begin{equation}\label{eq:Msharp-def}
(\textup{M}^{\sharp}f)(x) :=
\sup_{r>0} \fint_{\Ball{r}{x}}\Big\lvert f(y)-\fint_{\Ball{r}{x}}f \Big\rvert \dd y, \quad x\in\R^n.
\end{equation}

\begin{problem}[{\cite[Section~4.7, Problem~20]{Mazya2018}}]
Does there exist a Banach space $B$ such that one of the operators $\textup{M}^{\sharp}$ or $\textup{M}^{\diamond}$ is bounded in $B$ whereas the other operator is not bounded?
\end{problem}

It is implicitly understood that $B$ should be contained in the vector space of a.e.-equivalence classes of locally integrable functions, $\textup{L}^{1}_{\mathrm{loc}}(\R^{n})$, since otherwise $\textup{M}^{\diamond}$ and $\textup{M}^{\sharp}$ would not be meaningfully defined on the whole $B$.
We might additionally aim to construct a translation-invariant space $B$, since most important spaces in the literature are, in fact, invariant under translations $f\mapsto f(\cdot-x_0)$.

We are not aware of previous work addressing this boundedness distinction question and the present note gives an affirmative answer to it. 
By contrast, there are several broad classes of spaces on which the two operators in Maz'ya's question are both bounded. If we also write
\[ (\textup{M}f)(x):=\sup_{r>0}\fint_{\Ball{r}{x}}|f|, \quad x\in\R^n \]
for the Hardy--Littlewood maximal operator, then the elementary inequalities
\begin{equation}\label{eq:domination-by-HL}
\textup{M}^{\diamond}f\le \textup{M}^{\sharp}f\le 2\textup{M}f
\end{equation}
hold pointwise for every locally integrable $f$, the first one being an immediate consequence of the fact that $\textup{M}^{\diamond}$ was defined using the cancellative kernel \eqref{eq:Mdiamond-kernel}. Consequently, if $X$ is a Banach function lattice on which $\textup{M}$ is bounded, then both $\textup{M}^{\diamond}$ and $\textup{M}^{\sharp}$ are bounded on $X$ as well. 
That way the common boundedness of $\textup{M}^{\diamond}$ and $\textup{M}^{\sharp}$ is already known on all the standard spaces for which the Hardy--Littlewood maximal theorem is available: for example, on $\textup{L}^{p}(\R^{n})$ for $1<p\le\infty$ \cite{Stein1970}, on the weighted spaces $\textup{L}^{p}(w)$ for $1<p<\infty$ and $w\in A_{p}$ \cite{Muckenhoupt1972,GarciaCuervaRubio1985}, on rearrangement-invariant Banach function spaces whose lower Boyd index is larger than one, in particular on Lorentz spaces $\textup{L}^{p,q}(\R^n)$ with $1<p<\infty$ and $1\le q\le\infty$ \cite{BennettSharpley1988,LoristNieraeth2024}, and on variable Lebesgue spaces $\textup{L}^{p(\cdot)}(\R^n)$ under the usual hypotheses guaranteeing boundedness of $\textup{M}$, such as log-H\"older continuity with $1<p_-\le p_+<\infty$ \cite{Diening2004,CruzUribeFiorenza2013}.

The problem is closely related to, but apparently distinct from, several well-developed lines of work. 
The first related body of work concerns the Fefferman--Stein sharp function and its good $\lambda$ inequalities. The classical theory gives estimates of $\textup{M}f$ in terms of $f^{\sharp}$ and is central in weighted inequalities and singular integrals \cite{FeffermanStein1971,FeffermanStein1972,CoifmanFefferman1974,GarciaCuervaRubio1985}. A closely related comparison problem for variants of the sharp maximal function was studied by Martell, who introduced sharp maximal functions associated with approximations of the identity on spaces of homogeneous type, proved good $\lambda$ estimates for them, and noted that they may be pointwise smaller than the classical sharp maximal function while not being comparable to it in general \cite{Martell2004}. Lerner later characterized Fefferman--Stein inequalities on Banach function spaces in terms of boundedness properties of $\textup{M}$ \cite{Lerner2010}, and related Fefferman--Stein equivalences and maximal operator criteria have continued to be studied \cite{Lerner2020,Nieraeth2023,Lerner2025}. 

A second related line of work studies maximal operators at the endpoint $p=\infty$, especially on $\textup{BMO}$; see \cite{CoifmanRochberg1980,BennettDeVoreSharpley1981,Bennett1982,GarciaCuervaMaciasTorrea1998,Lerner2007}. However, in our case, boundedness statements on $\textup{BMO}$ are immediate. With the present convention of using balls in the definitions,
\[ \|\textup{M}^{\sharp}f\|_{\textup{L}^{\infty}(\R^n)} = \|f\|_{\textup{BMO}(\R^n)}. \]
Hence
\[ \|\textup{M}^{\sharp}f\|_{\textup{BMO}(\R^n)}\lesssim \|f\|_{\textup{BMO}(\R^n)} \]
and, because of \eqref{eq:domination-by-HL}, the same conclusion holds for $\textup{M}^{\diamond}$.

This places us in the third context, the study of smoothness/regularity properties of maximal operators, where the norm is not merely a lattice norm. Kinnunen proved the boundedness of the Hardy--Littlewood maximal operator on $W^{1,p}(\R^{n})$, $1<p\le\infty$ \cite{Kinnunen1997}. This initiated substantial regularity theory for maximal operators on other Sobolev spaces \cite{KinnunenSaksman2003,HajlaszOnninen2004,Luiro2007,AaltoKinnunen2008,BRS19,LM19,BM20,BM21,KinnunenLehrbackVahakangas2021,CGM22,BGMW23}, H\"{o}lder spaces \cite{Buckley1999}, Triebel--Lizorkin spaces \cite{Korry2002}, and functions of bounded variation \cite{CS13,CM17,CMP17,CFS18,Madrid19}.
However, many of these spaces do not fully lie in $\textup{L}^{1}_{\mathrm{loc}}(\R^{n})$ for some values of their parameters, which invalidates them as potential examples. The author was not able to find a single classical space among them that actually solves the problem.

\subsection{The main result}
The purpose of this note is to prove that the two operators \eqref{eq:Mdiamond-def} and \eqref{eq:Msharp-def} nevertheless have different boundedness properties. 
The heuristic for the construction relies on the fact that \eqref{eq:Mdiamond-def} essentially takes one Littlewood--Paley piece at a time of the input function. For instance, in one dimension, the Fourier transform of \eqref{eq:Mdiamond-kernel} is $(1-\cos(2\pi\xi))/(2\pi\mathbbm{i}\xi)$, which decays, albeit slowly. On the other hand, \eqref{eq:Msharp-def} always takes the full frequency portrait of the input function. Thus, the two maximal operators should be distinguished by a Besov norm composed as an $\ell^\infty$ quantity of the sizes of the Littlewood--Paley pieces. It will ignore the accumulation of many separated frequencies; $\textup{M}^{\diamond}$ respects that structure, while $\textup{M}^{\sharp}$ destroys it by measuring absolute oscillation. It is then easy to construct a sequence of functions $(f_N)_{N=1}^\infty$ such that
\[ \frac{\|\textup{M}^{\sharp}f_N\|}{\|f_N\|} \gtrsim N^{1/2} \rightarrow\infty \]
in that norm.
The only problem is that such a Banach space is not contained in $\textup{L}^{1}_{\mathrm{loc}}(\R^{n})$, but is a space of distributions. We patch this by intersecting it, somewhat artificially, with the space of square-integrable functions. The Besov part and the square-integrable part scale differently under dilations $x\mapsto \lambda x$ of the domain $\R^n$.
Despite the fact that both $\textup{M}^{\diamond}$ and $\textup{M}^{\sharp}$ are bounded on $\textup{L}^{2}(\R^{n})$, we can thus scale the functions $f_N$ carefully to make the $\textup{L}^2$ part harmless and, in the new norm, preserve the above property with only a logarithmic loss:
\[ \frac{\|\textup{M}^{\sharp}f_N\|}{\|f_N\|} \gtrsim \frac{N^{1/2}}{\log N} \rightarrow\infty. \]

We turn to the details of the construction.
Fix a decreasing function $\varphi\in\textup{C}^\infty(\R)$ that is identically $1$ on $(-\infty,1]$ and identically $0$ on $[2,\infty)$. Define $n$-dimensional Schwartz functions $h_{\le0},h_1,h_2,\ldots$ via their Fourier transforms as
\begin{align*}
\widehat{h_{\le0}}(\xi) & := \varphi(|\xi|), \quad \xi\in\R^n,\\
\widehat{h_j}(\xi) & := \varphi\Bigl(\frac{|\xi|}{2^j}\Bigr) - \varphi\Bigl(\frac{|\xi|}{2^{j-1}}\Bigr), \quad \xi\in\R^n, \quad j\ge1.
\end{align*}
Convolution-type operators $\PP_{\!\le0},\PP_1,\PP_2,\ldots$ defined as
\begin{align*}
\PP_{\!\le0}f & := f \ast h_{\le0},\\
\PP_j f & := f \ast h_j, \quad j\ge1,
\end{align*}
then constitute a smooth inhomogeneous Littlewood--Paley decomposition:
\[ \PP_{\!\le0}+ \sum_{j=1}^{\infty}\PP_{j} = I. \]
In words, $\PP_{\!\le0}$ is a smooth projection to low frequencies $|\xi|\le 2$, while $\PP_{j}$ are smooth projections to frequencies $2^{j-1}\le |\xi|\le 2^{j+1}$. 

For $f\in\textup{L}^{2}(\R^n)$ we introduce a norm which is the sum of a Besov-type norm and the ordinary $\textup{L}^2$ norm,
\[ \|f\|_{B} := \|\PP_{\!\le0}f\|_{\textup{L}^{\infty}(\R^n)} + \sup_{j\ge1} \|\PP_{j}f\|_{\textup{L}^{\infty}(\R^n)} + \|f\|_{\textup{L}^2(\R^n)}, \]
and define the corresponding function space,
\[ B := \bigl\{ f\in\textup{L}^{2}(\R^n) \,:\, \|f\|_{B}<\infty \bigr\}. \]
It is clear that $B$ is translation-invariant and
\[ B \subseteq \textup{L}^{2}(\R^n)\subseteq \textup{L}^{1}_{\mathrm{loc}}(\R^{n}). \]
It is also easy to see that $B$ is complete, simply because $\textup{L}^2(\R^n)$, $\textup{L}^\infty(\R^n)$, and the vector-valued space $\ell^\infty\bigl(\mathbb{N};\textup{L}^\infty(\mathbb{R}^n)\bigr)$ are all complete. Note that $B$ is not a Banach lattice.

Here is our main result.

\begin{theorem}\label{thm:main}
In every dimension $n\ge1$, the maximal operator $\textup{M}^{\diamond}$ is bounded on $B$ while $\textup{M}^{\sharp}$ is not.
\end{theorem}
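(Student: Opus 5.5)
The proof splits into a boundedness part for $\textup{M}^{\diamond}$ and a counterexample for $\textup{M}^{\sharp}$. For the positive part, the $\textup{L}^2$ component is immediate from \eqref{eq:domination-by-HL} and the $\textup{L}^2$ boundedness of $\textup{M}$. The $\widehat{h_j}$ are dilates of one fixed bump, so $\sup_j\|h_j\|_{\textup{L}^1}<\infty$ and $\|h_{\le0}\|_{\textup{L}^1}<\infty$. Hence every Littlewood--Paley piece of $g=\textup{M}^{\diamond}f$ is controlled by $\|g\|_{\textup{L}^\infty}$. It therefore suffices to prove
\[ \|\textup{M}^{\diamond}f\|_{\textup{L}^\infty(\R^n)}\lesssim \|\PP_{\!\le0}f\|_{\textup{L}^{\infty}}+\sup_{j\ge1}\|\PP_jf\|_{\textup{L}^\infty}. \]
Write $K_r:=r^{-n}K^{\diamond}(\cdot/r)$ and decompose $f=\PP_{\!\le0}f+\sum_j\PP_jf$. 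Then $|f\ast K_r|\le \|\PP_{\!\le0}f\|_\infty\|h_{\le0}\ast K_r\|_1+\sup_j\|\PP_jf\|_\infty\sum_j\|h_j\ast K_r\|_1$, and the core of the argument is the uniform bound
\[ \sup_{r>0}\Bigl(\|h_{\le0}\ast K_r\|_{\textup{L}^1}+\sum_{j\ge1}\|h_j\ast K_r\|_{\textup{L}^1}\Bigr)<\infty. \]

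I would prove this bound by two complementary estimates.

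\emph{Low frequencies} ($2^jr\le1$, and $h_{\le0}$ when $r\le1$). Use $\int K_r=0$ to write $h_j\ast K_r(x)=\int K_r(y)\bigl(h_j(x-y)-h_j(x)\bigr)\dd y$. This gives $\|h_j\ast K_r\|_1\le\int|K_r(y)|\,\|h_j(\cdot-y)-h_j\|_1\dd y\lesssim 2^jr$.

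\emph{High frequencies} ($2^jr>1$). Use $\int h_j=0$ in the same way: $\|h_j\ast K_r\|_1\le\int|h_j(t)|\,\|K_r(\cdot-t)-K_r\|_1\dd t$. The kernel $K^{\diamond}$ is of bounded variation: it has a jump across the unit sphere and a gradient of size $|x|^{-1}$ near $0$, which is integrable for $n\ge2$, while for $n=1$ there is a jump at $0$. Hence $\|K_r(\cdot-t)-K_r\|_1\lesssim\min(|t|/r,1)$, and so $\|h_j\ast K_r\|_1\lesssim (2^jr)^{-1}$.

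For $h_{\le0}$ with $r>1$ the trivial bound $\|h_{\le0}\|_1\|K_r\|_1$ suffices. Summing the two geometric series gives the uniform bound. Note that $\textup{M}^{\diamond}f$ need not be smooth, but we only ever need $L^\infty$ of it.

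For the negative part, I first need a lower bound for $\|g\|_B$ when $g\ge0$. Fix $\phi\ge0$ with $\widehat\phi$ supported in the unit ball; for instance $\phi=|\check\psi|^2$ with $\widehat{\psi}$ supported in the ball of radius $1/2$. Then $\phi\ge c>0$ on some ball $\Ball{\rho}{0}$. Set $\phi_R:=R^{-n}\phi(\cdot/R)$ with $R<1$. Since $\widehat{\phi_R}$ lives in $|\xi|\le1/R$, only $\PP_{\!\le0}$ and $\PP_j$ with $2^{j-1}\le 1/R$ see it. Consequently
\[ \int g\,\phi_R(\cdot-x_0)\lesssim \log(2/R)\,\|g\|_B. \]
This is the source of the logarithmic loss.

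The test functions are
\[ f_N(x):=\eta(x/R)\sum_{k=1}^N\cos(2\pi2^{m+k}x_1), \]
with $\eta$ a fixed smooth bump, $R^n=1/N$, and $2^m\gg1/R$, so that the modulation is at frequencies far above $1/R$. Each Littlewood--Paley piece of $f_N$ sees $O(1)$ of the lacunary cosines, so the Besov part is $O(1)$. The $\textup{L}^2$ norm is $\approx R^{n/2}N^{1/2}\approx1$. Thus $\|f_N\|_B\lesssim1$.

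On the other hand, for $|x|\le\rho R$, consider balls of radius $\sim R$ inside the region where $\eta(\cdot/R)\approx1$. Lacunary (Zygmund/Khintchine-type) estimates give an $\textup{L}^1$ mean oscillation of $\sum_k\cos(2\pi2^{m+k}x_1)$ of order $N^{1/2}$. Hence $\textup{M}^{\sharp}f_N\gtrsim N^{1/2}$ on $\Ball{\rho R}{0}$, and pairing with $\phi_R$ gives
\[ \|\textup{M}^{\sharp}f_N\|_B\gtrsim N^{1/2}/\log N. \]

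The main obstacle I expect is the lacunary lower bound on the oscillation uniformly over the relevant balls. I would handle it by computing the $\textup{L}^2$ and $\textup{L}^4$ norms of the lacunary sum over a ball of radius $\gg 2^{-m}$: near-orthogonality gives both up to small errors, and then Hölder's inequality $\|F\|_2^{3}\le\|F\|_1\|F\|_4^{2}$ bounds $\|F\|_1$ from below. The average itself is negligible, which yields the $N^{1/2}$ oscillation.
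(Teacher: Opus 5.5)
\textbf{Boundedness of $\textup{M}^{\diamond}$.} Your argument is the paper's: reduce to $\|\textup{M}^{\diamond}f\|_{\textup{L}^\infty}\lesssim\|f\|_{\calB}$, prove the two cancellation bounds $\min\{2^jr,(2^jr)^{-1}\}$, and sum them. One step does not follow as written. You bound $|\PP_jf\ast K_r|$ by $\|\PP_jf\|_{\textup{L}^\infty}\|h_j\ast K_r\|_{\textup{L}^1}$, but $\PP_jf\ast K_r=f\ast h_j\ast K_r$. Since $h_j\ast h_j\neq h_j$, Young's inequality only gives $\|f\|_{\textup{L}^\infty}\|h_j\ast K_r\|_{\textup{L}^1}$, and $f\notin\textup{L}^\infty$ in general. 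The same problem affects the term $\|\PP_{\!\le0}f\|_\infty\|h_{\le0}\ast K_r\|_1$.

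The repair is exactly the paper's device. Take $\widetilde h_j$ with $\widehat{\widetilde h_j}=1$ on $\textup{supp}\,\widehat{h_j}$, so that $\PP_jf\ast K_r=\PP_jf\ast(\widetilde h_j\ast K_r)$ by \eqref{eq:DDisD}. Your two estimates use only mean zero, $\|\nabla\widetilde h_j\|_{\textup{L}^1}\lesssim2^j$ and $\int|y|\,|\widetilde h_j(y)|\dd y\lesssim2^{-j}$, so they go through verbatim. For the low-frequency term, simply use $\|\PP_{\!\le0}f\|_{\textup{L}^\infty}\|K_r\|_{\textup{L}^1}$.

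\textbf{Unboundedness of $\textup{M}^{\sharp}$.} Your route is correct and genuinely different from the paper's.
\begin{itemize}
\item The paper works at unit scale with $F_N=\psi\,S_N(x_1)$. It gets $\textup{M}^{\sharp}F_N\gtrsim N^{1/2}$ with no error terms, because periodicity and oddness of $S_N$ reduce $\inf_a\int_{-1/2}^{1/2}|S_N-a|$ to $\int_0^1|S_N|$, and then Zygmund's theorem applies. It transfers everything to scale $2^{-m_N}\approx N^{-1}$ through Lemma~\ref{lm:dilations}; the logarithm comes from $\|\Dil_{-m}g\|_{\calB}\lesssim(m+1)\|g\|_{\calB}$.
\item You build the example directly at scale $R=N^{-1/n}$ and push the carrier frequencies far above $1/R$. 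This makes the gradient of each modulated envelope $O(\lambda)$, so the proof of \eqref{eq:g_rzu} runs uniformly in $R$ and $m$. You then detect the size of the nonnegative function $\textup{M}^{\sharp}f_N$ by pairing it with a nonnegative band-limited bump $\phi_R$, which meets only $O(\log(1/R))$ Littlewood--Paley pieces.
\end{itemize}
That single duality estimate replaces both \eqref{eq:just_the_rho} and the dilation lemma. Your moment argument $\|F\|_2^3\le\|F\|_1\|F\|_4^2$ also makes the lacunary lower bound self-contained.

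The price of your route is error terms, and two points need tightening. First, take $\eta\equiv1$ exactly on the balls you use, not merely $\approx1$: $|F|$ can be as large as $N$, so an error $\varepsilon$ in $\eta$ can swamp $N^{1/2}$. Second, a crude count bounds the off-diagonal terms of $\int F^4$ over a ball by $N^4/(2^mR)$, so you need $2^mR\gtrsim N^2$. This is allowed, since your Besov bound is uniform in $m$. Alternatively, restrict to the complete periods of length $2^{-m-1}$ inside $[x_1-R/2,x_1+R/2]$. There the $\textup{L}^2$ and $\textup{L}^4$ identities are exact and $F$ has mean zero, so $\inf_a\int|F-a|\ge\frac12\int|F|$ on that set.
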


Our proof of Theorem~\ref{thm:main} is self-contained, up to the textbook facts from harmonic analysis.
In Section~\ref{sec:positive}, the boundedness of $\textup{M}^{\diamond}$ will be obtained as a special case of an endpoint estimate for more general cancellative dilation kernels, namely, compactly supported bounded mean-zero kernels whose components have bounded variation.
In Section~\ref{sec:negative} the failure of boundedness of $\textup{M}^{\sharp}$ on $B$ will be witnessed by an example constructed from lacunary trigonometric series.

While working on the problem, the author consulted OpenAI's \emph{ChatGPT} 5.4 Pro, which helped rule out (as potential examples) the classical function spaces covered in the literature. After the author finalized the proof, a new model, \emph{ChatGPT} 5.5 Pro, was released. It produced the following surprisingly simple but not entirely legitimate alternative example.

Let $\widetilde{B}$ be the space of all functions $f:\R^n\to\mathbb{C}$ that are Lipschitz, bounded, radial, and satisfy $f(0)=0$. Its appropriate norm is
\[ \|f\|_{\widetilde{B}} := 
\sup_{\substack{x,y\in\R^n\\ x\neq y}}\frac{|f(x)-f(y)|}{|x-y|}
+ \sup_{x\in\R^n}|f(x)|. \]
It is an exercise to show that $\textup{M}^{\diamond}$ is bounded on $\widetilde{B}$. However, $\textup{M}^{\sharp}$ does not even map $\widetilde{B}$ to itself, as is seen by considering the function $f(x):=\min\{|x|,1\}$. 
Note that $\widetilde{B}$ is a space of pointwise defined functions, which are sensitive to changes on a set of measure zero. More importantly, this space is clearly not translation-invariant, which, as we have already commented, might have been an implicit assumption behind Maz'ya's problem.

\subsection{Notation}
For two nonnegative expressions $A$ and $B$, we write $A\lesssim B$ and $B\gtrsim A$ if $A\le C B$ holds with some unimportant (finite) constant $C\ge0$. If we want to emphasize that $C$ also depends on some set of parameters $P$, we write instead $A\lesssim_P B$ and $B\gtrsim_P A$. Most such dependencies, such as on the dimension $n$ of the ambient space, will be understood implicitly.
We write $A\sim B$ when both $A\lesssim B$ and $B\lesssim A$ hold.

The \emph{Fourier transform}, initially defined for integrable functions $f\colon\R^n\to\mathbb{C}$, will be written and normalized as
\[ \widehat{f}(\xi) := \int_{\R^n} f(x) e^{-2\pi \mathbbm{i} x\cdot\xi} \dd x. \]
The \emph{convolution} of $f$ and $g$ is defined by
\[ (f\ast g)(x) := \int_{\R^n} f(x-y) g(y) \dd y, \]
as usual.


\section{Boundedness of the Maz'ya--Shaposhnikova maximal operator}
\label{sec:positive}
For the main part of the argument we can work with a more general maximal operator than $\textup{M}^{\diamond}$. Take a positive integer $m$. Let
\[ K=(K^1,\ldots,K^m):\R^n\to\R^m \]
be a compactly supported vector-valued kernel such that each of its components belongs to $\textup{L}^\infty(\R^n)\cap \textup{BV}(\R^n)$ and
\begin{equation}\label{eq:general_ker}
\int_{\R^n}K^l(x)\dd x=0
\end{equation}
for $l=1,2,\ldots,m$. 
It is clear that the kernel $K^{\diamond}$ from \eqref{eq:Mdiamond-kernel} satisfies all these requirements.

Each $K^l$, as a function of bounded variation, has the distributional derivative $\textup{D} K^l$, which is a finite Radon measure. 
Its total variation, $\|\textup{D} K^l\|_{\textup{TV}}$, is precisely the usual $\textup{BV}$ seminorm of $K^l$.
For $r>0$, denote the $\textup{L}^1$-dilates of $K$ by
\[ K_r(x)=\frac{1}{r^{n}}K\Bigl(\frac{x}{r}\Bigr) \]
and define the abstract maximal function as
\begin{equation}\label{eq:TK_star2}
(T^\star_K f)(x)=\sup_{\Q\ni r>0}|(K_r\ast f)(x)|.
\end{equation}
Since $K$ is essentially bounded and compactly supported, the integral defining $(K_r\ast f)(x)$ is absolutely convergent whenever $f\in\textup{L}^1_{\mathrm{loc}}(\R^n)$.

Functions $h_1,h_2,\ldots$ from the construction of the Littlewood--Paley decomposition clearly satisfy
\begin{equation}\label{eq:standardLP}
\int_{\R^n} h_j=0,\quad
\|h_j\|_{\textup{L}^1(\R^n)}\lesssim1,\quad
\|\nabla h_j\|_{\textup{L}^1(\R^n;\R^n)}\lesssim2^j,\quad
\int_{\R^n}|x|\,|h_j(x)|\dd x\lesssim2^{-j}
\end{equation}
for every $j\ge1$.
Also define slightly increased smooth cutoffs $\widetilde{h}_1,\widetilde{h}_2,\ldots$ via
\[ \widehat{\widetilde{h}_j}(\xi) := \varphi\Bigl(\frac{|\xi|}{2^{j+1}}\Bigr) - \varphi\Bigl(\frac{|\xi|}{2^{j-2}}\Bigr), \quad \xi\in\R^n, \quad j\ge1 \]
and observe that they satisfy the same type of estimates:
\begin{equation}\label{eq:htildeLP}
\int_{\R^n} \widetilde{h}_{j}=0, \quad
\|\widetilde{h}_j\|_{\textup{L}^1(\R^n)}\lesssim1, \quad
\|\nabla \widetilde{h}_{j}\|_{\textup{L}^{1}(\R^n;\R^n)}\lesssim 2^{j}, \quad
\int_{\R^{n}} |x|\,|\widetilde{h}_{j}(x)|\dd x\lesssim 2^{-j}.
\end{equation}
The auxiliary projections
\[ \widetilde{\PP}_j f := f \ast \widetilde{h}_j, \quad j\ge1 \]
then clearly satisfy
\begin{equation}\label{eq:DDisD}
\widetilde{\PP}_{j}\PP_{j}=\PP_{j}.
\end{equation}

All implicit constants in the notation $\lesssim$ below are allowed to depend on $m$, $n$, $K$, and the functions $h_j$ and $\widetilde{h}_j$.

\begin{lemma}
For $r>0$ and $j\ge 1$,
\begin{equation}\label{eq:general-kernel-estimate}
\|K_{r}\ast \widetilde{h}_{j}\|_{\textup{L}^{1}(\R^n;\R^m)} \lesssim_K \min\{2^{j}r,(2^{j}r)^{-1}\}.
\end{equation}
\end{lemma}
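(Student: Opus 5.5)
Split into the two regimes $2^j r\le 1$ and $2^j r\ge 1$; each half of the minimum comes from one cancellation property.

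\emph{Case $2^jr\le 1$ (kernel wider than the Littlewood--Paley scale).} We exploit the cancellation of $\widetilde{h}_j$ from \eqref{eq:htildeLP} together with the $\textup{BV}$ regularity of $K$. Since $\int\widetilde{h}_j=0$,
\[ (K_r\ast\widetilde{h}_j)(x)=\int_{\R^n}\bigl(K_r(x-y)-K_r(x)\bigr)\widetilde{h}_j(y)\dd y. \]
Taking the $\textup{L}^1$ norm in $x$ and applying Minkowski's inequality gives
\[ \|K_r\ast\widetilde{h}_j\|_{\textup{L}^1}\le \int_{\R^n}\|K_r(\cdot-y)-K_r\|_{\textup{L}^1}\,|\widetilde{h}_j(y)|\dd y. \]
For a $\textup{BV}$ function the translation estimate $\|g(\cdot-y)-g\|_{\textup{L}^1}\le |y|\,\|\textup{D}g\|_{\textup{TV}}$ holds; prove it for smooth $g$ and pass to $\textup{BV}$ by mollification. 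By scaling, $\|\textup{D}K_r\|_{\textup{TV}}=r^{-1}\|\textup{D}K\|_{\textup{TV}}$, applied componentwise in $l$. Hence the bound is
\[ \lesssim r^{-1}\int_{\R^n}|y|\,|\widetilde{h}_j(y)|\dd y\lesssim (2^jr)^{-1} \]
by the last estimate in \eqref{eq:htildeLP}.

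\emph{Case $2^jr\ge1$ (kernel narrower than the Littlewood--Paley scale).} The roles swap: use the mean-zero property \eqref{eq:general_ker} of $K$ and the smoothness of $\widetilde{h}_j$. Write
\[ (K_r\ast\widetilde{h}_j)(x)=\int_{\R^n}K_r(y)\bigl(\widetilde{h}_j(x-y)-\widetilde{h}_j(x)\bigr)\dd y. \]
Minkowski's inequality and the translation estimate for the smooth function $\widetilde{h}_j$ give
\[ \|K_r\ast\widetilde{h}_j\|_{\textup{L}^1}\le \|\nabla\widetilde{h}_j\|_{\textup{L}^1}\int_{\R^n}|y|\,|K_r(y)|\dd y\lesssim 2^j\cdot r\int_{\R^n}|y|\,|K(y)|\dd y. \]
The last integral is finite because $K$ is bounded and compactly supported, which yields $\lesssim 2^jr$.

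Together with the trivial bound $\|K_r\ast\widetilde h_j\|_{\textup{L}^1}\le\|K\|_{\textup{L}^1}\|\widetilde h_j\|_{\textup{L}^1}\lesssim1$ (not needed, but consistent), each case gives exactly the smaller of $2^jr$ and $(2^jr)^{-1}$ in its regime. The only step needing any care is the $\textup{BV}$ translation estimate for $K$, which may have jumps (as $K^{\diamond}$ does). It is standard: approximate $K^l$ in $\textup{L}^1$ by smooth $g_\epsilon$ whose gradient $\textup{L}^1$ norms converge to $\|\textup{D}K^l\|_{\textup{TV}}$ (Anzellotti--Giaquinta / mollification), then pass to the limit. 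Everything else is bookkeeping of dilation factors.
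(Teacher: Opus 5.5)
Your argument is correct and is essentially the paper's proof: the mean-zero property of $K$ together with $\|\nabla\widetilde{h}_j\|_{\textup{L}^1}\lesssim 2^j$ gives the bound $2^jr$, and the mean-zero property of $\widetilde{h}_j$ together with the $\textup{BV}$ translation estimate and $\|\textup{D}K_r\|_{\textup{TV}}=r^{-1}\|\textup{D}K\|_{\textup{TV}}$ gives $(2^jr)^{-1}$. One slip: your case labels are swapped, since $(2^jr)^{-1}$ is the smaller quantity when $2^jr\ge1$, not when $2^jr\le1$; this is harmless because neither derivation uses its case hypothesis, so both bounds hold for all $r>0$, $j\ge1$, and taking their minimum finishes the proof exactly as in the paper.
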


\begin{proof}
For the estimate by $2^j r$, we use the cancellation of $K_r$,
\[ (K_r\ast \widetilde h_j)(x)
\stackrel{\eqref{eq:general_ker}}{=} \int_{\R^n} K_r(y)\bigl(\,\widetilde h_j(x-y)-\widetilde h_j(x)\bigr)\dd y
= - \int_{\R^n} \int_0^1 K_r(y) \bigl(y\cdot\nabla\widetilde{h}_j(x-\theta y)\bigr) \dd\theta \dd y, \]
followed by Minkowski's inequality for integrals:
\[ \|K_r\ast \widetilde{h}_j\|_{\textup{L}^{1}(\R^n;\R^m)}
\le \int_{\R^n} |y| \,|K_r(y)|\, \|\nabla\widetilde{h}_j\|_{\textup{L}^1(\R^n;\R^n)}\dd y
\stackrel{\eqref{eq:htildeLP}}{\lesssim} 2^j r \int_{\R^n} |y| \,|K(y)| \dd y
\lesssim 2^j r. \]

For the complementary estimate, use the cancellation of $\widetilde h_j$,
\[ (K_r\ast \widetilde h_j)(x)
\stackrel{\eqref{eq:htildeLP}}{=} \int_{\R^n} \widetilde h_j(y)\bigl(K_r(x-y)-K_r(x)\bigr)\dd y, \]
and apply Minkowski's inequality again:
\begin{align*} 
\|K_r\ast \widetilde h_j\|_{\textup{L}^{1}(\R^n;\R^m)}
& \le \int_{\R^n} |\widetilde h_j(y)|\,\|K_r(\cdot-y)-K_r\|_{\textup{L}^{1}(\R^n;\R^m)} \dd y \\
& \le \int_{\R^n} |\widetilde h_j(y)|\,|y| \Bigl(\sum_{l=1}^{m}\|\textup{D}K^l_r\|_{\textup{TV}}\Bigr) \dd y \\
& = r^{-1}  \Bigl(\sum_{l=1}^{m}\|\textup{D}K^l\|_{\textup{TV}}\Bigr) \int_{\R^n} |y|\,|\widetilde h_j(y)| \dd y
\stackrel{\eqref{eq:htildeLP}}{\lesssim} 2^{-j} r^{-1}.
\end{align*}
This proves the lemma.
\end{proof}

Denote, for brevity, the Besov part of the norm $\|\cdot\|_B$ by
\[ \|f\|_{\calB} := \|\PP_{\!\le0}f\|_{\textup{L}^{\infty}(\R^n)} + \sup_{j\ge1} \|\PP_{j}f\|_{\textup{L}^{\infty}(\R^n)}. \]
Let us record an elementary embedding,
\begin{equation}\label{eq:eemb}
\|f\|_{\calB} \lesssim \|f\|_{\textup{L}^{\infty}(\R^n)},
\end{equation}
which is an immediate consequence of the uniform $\textup{L}^1$ bounds on $h_{\le0},h_1,h_2,\ldots$ from \eqref{eq:standardLP}.

\begin{proposition}\label{prop:positive}
Let $K:\R^n\to\R^m$ be compactly supported, mean zero, and component-wise in $\textup{L}^\infty(\R^n)\cap \textup{BV}(\R^n)$. Then the maximal operator \eqref{eq:TK_star2} satisfies
\begin{equation}\label{eq:TKstar_bound}
\|T^\star_Kf\|_{\textup{L}^{\infty}(\R^n)} \lesssim_K \|f\|_{\calB}
\end{equation}
for every $f\in B$.
\end{proposition}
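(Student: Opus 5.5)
Fix a rational $r>0$ and a point $x\in\R^n$. We aim for a bound on $|(K_r\ast f)(x)|$ that is uniform in $r$ and $x$; taking the supremum over rational $r$ then gives \eqref{eq:TKstar_bound}. Since $f\in\textup{L}^2(\R^n)$, the Littlewood--Paley decomposition $f=\PP_{\!\le0}f+\sum_{j\ge1}\PP_jf$ converges in $\textup{L}^2$. For fixed $r$ we have $K_r\in\textup{L}^2$ (it is bounded with compact support), so $g\mapsto (K_r\ast g)(x)$ is a continuous linear functional on $\textup{L}^2$. Hence
\[ (K_r\ast f)(x) = (K_r\ast \PP_{\!\le0}f)(x) + \sum_{j\ge1}(K_r\ast\PP_jf)(x), \]
and it suffices to bound each piece and sum.

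For the low-frequency piece we use the trivial bound
\[ |(K_r\ast\PP_{\!\le0}f)(x)|\le \|K\|_{\textup{L}^1}\|\PP_{\!\le0}f\|_{\textup{L}^\infty}\lesssim\|f\|_{\calB}. \]
For $j\ge1$ we insert the auxiliary projection via \eqref{eq:DDisD}: $\PP_jf=\widetilde{\PP}_j\PP_jf$, so $K_r\ast\PP_jf=(K_r\ast\widetilde h_j)\ast\PP_jf$. Young's inequality ($\textup{L}^1\ast\textup{L}^\infty$) and the preceding lemma \eqref{eq:general-kernel-estimate} give
\[ |(K_r\ast\PP_jf)(x)|\le \|K_r\ast\widetilde h_j\|_{\textup{L}^1}\|\PP_jf\|_{\textup{L}^\infty}\lesssim \min\{2^jr,(2^jr)^{-1}\}\,\|f\|_{\calB}. \]
We then sum over $j\ge1$. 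The sequence $\min\{2^jr,(2^jr)^{-1}\}$ is geometric on each side of the index $j_0$ with $2^{j_0}r\approx1$. Summing over all $j\in\Z$ yields an absolute constant independent of $r$, namely at most $2\sum_{k\ge0}2^{-k}+O(1)$.

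Combining the two bounds gives $|(K_r\ast f)(x)|\lesssim_K\|f\|_{\calB}$ for all $x$ and all rational $r$, which is \eqref{eq:TKstar_bound}.

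The only delicate step is justifying the interchange of the series with evaluation at the single point $x$, since pointwise convergence of Littlewood--Paley series is not automatic. The $\textup{L}^2$ continuity argument above handles this. Alternatively, one could note that each $K_r\ast\PP_jf$ is continuous and that the series converges absolutely and uniformly by the bounds just obtained, so its sum agrees with $K_r\ast f$ everywhere, not merely almost everywhere. Either route also shows why the $\textup{L}^2$ component of the norm is needed in the definition of $B$: it guarantees that $f$ is actually recovered from its pieces.
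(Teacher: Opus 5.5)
Your proof is correct and follows essentially the same route as the paper: split $f$ into its Littlewood--Paley pieces, bound the low-frequency piece by $\|K\|_{\textup{L}^1}\|\PP_{\!\le0}f\|_{\textup{L}^\infty}$, insert $\widetilde{h}_j$ via \eqref{eq:DDisD} and apply \eqref{eq:general-kernel-estimate}, then sum $\min\{2^jr,(2^jr)^{-1}\}$ over $j$. The only (cosmetic) difference is that you justify the termwise identity at every point $x$ through the $\textup{L}^2$-continuity of $g\mapsto (K_r\ast g)(x)$, which gives a bound everywhere and so avoids the paper's closing remark that the countable supremum commutes with the $\textup{L}^\infty$ norm.
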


\begin{proof}
Take $f\in B$ and $r>0$. Write
\[ f = \PP_{\!\le0}f+\sum_{j=1}^{\infty}\PP_j f, \]
so that
\begin{equation}\label{eq:prop1}
K_r \ast f = K_r\ast \PP_{\!\le0}f + \sum_{j=1}^{\infty} K_r \ast \PP_j f, 
\end{equation}
both with convergence in $\textup{L}^2(\R^n)$. 

For the low-frequency part, we write
\begin{equation}\label{eq:prop2}
\|K_r\ast \PP_{\!\le0}f\|_{\textup{L}^\infty(\R^n;\R^m)}
\le \|K_r\|_{\textup{L}^1(\R^n;\R^m)} \|\PP_{\!\le0}f\|_{\textup{L}^\infty(\R^n)} \lesssim \|f\|_{\calB}.
\end{equation}
For the high-frequency part associated with $j\ge1$, we have
\[ K_r\ast \PP_j f \stackrel{\eqref{eq:DDisD}}{=} (K_r\ast \widetilde h_j)\ast \PP_j f, \]
so estimate \eqref{eq:general-kernel-estimate} gives
\begin{equation}\label{eq:prop3}
\|K_r\ast \PP_j f\|_{\textup{L}^\infty(\R^n;\R^m)}
\le \|K_r\ast \widetilde h_j\|_{\textup{L}^1(\R^n;\R^m)} \|\PP_j f\|_{\textup{L}^\infty(\R^n)}
\stackrel{\eqref{eq:general-kernel-estimate}}{\lesssim} \min\{2^jr,(2^jr)^{-1}\} \|f\|_{\calB}. 
\end{equation}
If $k\in\Z$ is such that $2^k r\le1<2^{k+1}r$, then
\[ \sum_{j=1}^{\infty}\min\{2^jr,(2^jr)^{-1}\}
\le \sum_{\substack{j\in\N\\ j\le k}} 2^j r + \sum_{\substack{j\in\N\\ j>k}} (2^j r)^{-1} \lesssim 1. \]
Now we see that the series from \eqref{eq:prop1} also converges absolutely in $\textup{L}^\infty$, and \eqref{eq:prop2} and \eqref{eq:prop3} together imply
\[ \sup_{\Q\ni r>0}\|K_r\ast f\|_{\textup{L}^\infty(\R^n;\R^m)} \lesssim \|f\|_{\calB}. \]
The last countable supremum commutes with the $\textup{L}^\infty$ norm.
\end{proof}

\begin{proof}[Proof of the boundedness part of Theorem~\ref{thm:main}]
Estimate \eqref{eq:TKstar_bound} specialized to $T^\star_K=\textup{M}^{\diamond}$ and combined with \eqref{eq:eemb} gives
\begin{equation}\label{eq:pos1bd}
\|\textup{M}^{\diamond}f\|_{\calB} 
\lesssim \|\textup{M}^{\diamond}f\|_{\textup{L}^{\infty}(\R^n)} \lesssim \|f\|_{\calB},
\end{equation}
while pointwise domination by the Hardy--Littlewood maximal function \eqref{eq:domination-by-HL} ensures
\begin{equation}\label{eq:pos2bd}
\|\textup{M}^{\diamond}f\|_{\textup{L}^2(\R^n)} \lesssim \|\textup{M}f\|_{\textup{L}^2(\R^n)} \lesssim \|f\|_{\textup{L}^2(\R^n)}.
\end{equation}
Combining \eqref{eq:pos1bd} and \eqref{eq:pos2bd} we finally get $\|\textup{M}^{\diamond}f\|_B \lesssim \|f\|_B$.
\end{proof}


\section{Unboundedness of the sharp maximal operator}
\label{sec:negative}
We now prove the part of Theorem~\ref{thm:main} concerning $\textup{M}^{\sharp}$. The construction has three ingredients: a lacunary trigonometric sum whose scalar oscillation is of size $N^{1/2}$, a fixed cutoff that keeps the Besov norm uniformly bounded, and finally a dyadic dilation that makes the $\textup{L}^2$ norm harmless while losing only a logarithm in the Besov-part lower bound. The use of lacunary trigonometric sums in a similar context is unsurprising; see, for example, \cite{Kahane1985}.

Choose a real-valued function $\psi\in \textup{C}_{c}^{\infty}(\R^{n})$ that is identically $1$ on the ball $\Ball{2}{0}$. All constants below are also allowed to depend on $\psi$.
For $N\ge 1$ define a $1$-periodic trigonometric polynomial $S_N$,
\[ S_{N}(t) := \sum_{k=1}^{N} \sin(2\pi 2^k t), \]
and a function $F_N\colon\R^n \to\R$ by the formula
\[ F_N(x_1,x_2,\ldots,x_n) := \psi(x_1,x_2,\ldots,x_n) S_N(x_1). \]

\begin{lemma}\label{lm:FN_Besov}
We have
\begin{equation}\label{eq:FN_Besov2U}
\|F_N\|_{\calB} \lesssim 1
\end{equation}
uniformly in $N\ge 1$.
\end{lemma}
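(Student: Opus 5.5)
We need to show that $\|\PP_{\!\le0}F_N\|_{\textup{L}^\infty}$ and $\sup_{j\ge1}\|\PP_jF_N\|_{\textup{L}^\infty}$ are bounded uniformly in $N$. The plan is to write $F_N=\sum_{k=1}^N \psi(x)\sin(2\pi 2^k x_1)$ and to view each summand $g_k:=\psi\,\sin(2\pi 2^k x_1)$ as a function whose Fourier transform is concentrated near the two points $\pm 2^k e_1$, with rapidly decaying tails:
\[ \widehat{g_k}(\xi)=\tfrac{1}{2\mathbbm{i}}\bigl(\widehat\psi(\xi-2^ke_1)-\widehat\psi(\xi+2^ke_1)\bigr). \]
Since each $\PP_j$ (and $\PP_{\!\le0}$) is convolution with a kernel of uniformly bounded $\textup{L}^1$ norm, we have $\|\PP_j g_k\|_{\textup{L}^\infty}\lesssim \|g_k\|_{\textup{L}^\infty}\le\|\psi\|_{\textup{L}^\infty}$ trivially. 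So the only issue is to show that for fixed $j$ only $O(1)$ of the terms $k$ contribute at that size, and the rest contribute a summable amount.

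For the terms with $|k-j|\le 3$ we use this trivial bound, which gives $O(1)$ in total. For the remaining $k$, with $|k-j|\ge4$, the multiplier $\widehat{h_j}$ is supported in $2^{j-1}\le|\xi|\le2^{j+1}$, which is at distance $\gtrsim \max\{2^j,2^k\}$ from $\pm2^ke_1$. We bound
\[ \|\PP_jg_k\|_{\textup{L}^\infty}\le\|\widehat{h_j}\,\widehat{g_k}\|_{\textup{L}^1}\le \int_{2^{j-1}\le|\xi|\le2^{j+1}}\bigl(|\widehat\psi(\xi-2^ke_1)|+|\widehat\psi(\xi+2^ke_1)|\bigr)\dd\xi. \]
Using the Schwartz decay $|\widehat\psi(\eta)|\lesssim_M(1+|\eta|)^{-M}$ with $|\eta|\gtrsim\max\{2^j,2^k\}$, and noting that the region has volume $\lesssim 2^{jn}$, this is $\lesssim 2^{jn}\max\{2^j,2^k\}^{-M}\lesssim \max\{2^j,2^k\}^{n-M}$. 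Taking $M=n+1$, we get for $k>j+3$ a bound $2^{-k}$, summable to $\lesssim 1$. For $k<j-3$ we get $2^{-j}$ per term; there are at most $j$ such terms, so the total is $\lesssim j2^{-j}\lesssim 1$. Choosing $M=n+2$ gives even cleaner geometric sums.

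The low-frequency piece $\PP_{\!\le0}F_N$ is handled in the same way: its multiplier is supported in $|\xi|\le2$, so for $k\ge3$ we have $|\xi\mp2^ke_1|\gtrsim 2^k$ and the contribution is $\lesssim 2^{-k}$. The terms $k\le2$ are bounded trivially.

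The only step that needs care is keeping the counting uniform in $N$: the near-diagonal terms are $O(1)$ in number, and the off-diagonal tails are summable independently of $N$. With Schwartz decay of $\widehat\psi$ this is routine, so I expect no real obstacle. The mild point is to use the $\textup{L}^1$ norm of the Fourier transform for the off-diagonal terms (which is legitimate, since everything is Schwartz), rather than $\textup{L}^1$ kernel bounds, because the latter give no decay in $|k-j|$.
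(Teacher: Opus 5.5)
Your proof is correct and is essentially the paper's argument. Both split $F_N$ into the modulated bumps $\psi(x)\sin(2\pi 2^k x_1)$ (equivalently $g_{\pm 2^k}$), and for $2^k\gg 2^j$ both bound $\|\PP_j g\|_{\textup{L}^\infty}$ by $\|\widehat{h_j}\,\widehat{g}\|_{\textup{L}^1}$, using the rapid decay of $\widehat\psi$ and the $O(2^{jn})$ volume of the annulus. The only difference is the regime $2^k\lesssim 2^j$: the paper uses the mean zero of $h_j$, the moment bound $\int|x|\,|h_j(x)|\dd x\lesssim 2^{-j}$ and $|\nabla g|\lesssim 2^k$ to get the geometric bound $2^{k-j}$, so it needs no separate near-diagonal case. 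Your trivial bound for $|k-j|\le 3$ plus Fourier-support separation for $k<j-3$ works equally well. Incidentally, this shows your closing remark is slightly too strong: kernel-side estimates do give decay in $|k-j|$ once the cancellation of $h_j$ is used.
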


\begin{proof}
Let us first prove some Littlewood--Paley estimates for
\[ g_{\lambda}(x_1,x_2,\ldots,x_n) := \psi(x_1,x_2,\ldots,x_n) e^{2\pi\mathbbm{i}\lambda x_{1}}, \]
where $\lambda$ is a fixed parameter such that $|\lambda|\ge1$.
We claim that
\begin{equation}\label{eq:g_rzu}
\|\PP_j g_{\lambda}\|_{\textup{L}^\infty(\R^n)} \lesssim
\begin{cases}
|\lambda| 2^{-j}, & \text{when } |\lambda|<2^{j+2},\\
2^{jn}|\lambda|^{-n}, & \text{when } |\lambda|\ge 2^{j+2},
\end{cases}
\end{equation}
for every $j\ge1$.
Indeed, the first estimate actually holds for all $j$ and $\lambda$, because $h_j$ has mean zero, so
\[ (\PP_j g_{\lambda})(x) \stackrel{\eqref{eq:standardLP}}{=} \int_{\R^n} h_j(y)\bigl(g_{\lambda}(x-y)-g_{\lambda}(x)\bigr)\dd y \]
and then, by $|\partial_1 g_{\lambda}(x)|\lesssim |\lambda|$,
\[ \|\PP_j g_{\lambda}\|_{\textup{L}^{\infty}(\R^n)} \le \int_{\R^n} |h_j(y)| |y| \|\nabla g_{\lambda}\|_{\textup{L}^\infty(\R^n)}\dd y
\stackrel{\eqref{eq:standardLP}}{\lesssim} |\lambda| 2^{-j}. \]
In the second case, observe that
\[ \widehat{g_{\lambda}}(\xi_1,\xi_2,\ldots,\xi_n)=\widehat\psi(\xi_1-\lambda,\xi_2,\ldots,\xi_n). \]
On the support of $\widehat{h_j}$ one has $|\xi_1-\lambda|\ge|\lambda|/2$, while the support itself has volume $O(2^{jn})$. 
Since $\widehat{\psi}$ is rapidly decreasing,
\[ \|\PP_j g_{\lambda}\|_{\textup{L}^\infty(\R^n)}
\le \|\widehat{\PP_j g_{\lambda}}\|_{\textup{L}^1(\R^n)}
\lesssim 2^{jn}|\lambda|^{-n}. \]
This proves \eqref{eq:g_rzu}. 
Since no cancellation of $h_j$ was used, exactly the same Fourier argument applies to $h_{\le0}$ and gives
\begin{equation}\label{eq:g_rzu2}
\|\PP_{\!\le0}g_{\lambda}\|_{\textup{L}^\infty}\lesssim |\lambda|^{-1},
\end{equation}
at least when $|\lambda|\ge4$, while the estimate is trivial for $1\le|\lambda|<4$.

Now consider 
\[ F_N = \frac{1}{2\mathbbm{i}}\sum_{k=1}^N g_{2^k} - \frac{1}{2\mathbbm{i}}\sum_{k=1}^N g_{-2^k}. \]
For a fixed $j\ge1$,
\[ \|\PP_j F_N\|_{\textup{L}^\infty(\R^n)} 
\stackrel{\eqref{eq:g_rzu}}{\lesssim} \sum_{k=1}^{j+1} 2^{k-j} + \sum_{k=j+2}^N 2^{(j-k)n} \lesssim1. \]
Also, 
\[ \|\PP_{\!\le0}F_N\|_{\textup{L}^\infty(\R^n)} \stackrel{\eqref{eq:g_rzu2}}{\lesssim} \sum_{k=1}^N 2^{-k} \lesssim 1. \]
Combining these estimates proves \eqref{eq:FN_Besov2U}.
\end{proof}

\begin{lemma}\label{lm:Msharp_large}
The estimate 
\begin{equation}\label{eq:Msharp_large1}
\|\textup{M}^\sharp F_N\|_{\calB} \gtrsim N^{1/2}
\end{equation}
holds with a constant independent of $N\ge 1$.
\end{lemma}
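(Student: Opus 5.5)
We will show that $\textup{M}^\sharp F_N$ is pointwise of size at least $cN^{1/2}$ on a fixed ball, and then extract this size through the low-frequency projection $\PP_{\!\le0}$. Since $\|\textup{M}^\sharp F_N\|_{\calB}\ge\|\PP_{\!\le0}\textup{M}^\sharp F_N\|_{\textup{L}^\infty(\R^n)}\ge|(\PP_{\!\le0}\textup{M}^\sharp F_N)(x_0)|$ for suitable $x_0$, positivity will carry the argument. Note that $\textup{M}^\sharp F_N\ge0$. However, $h_{\le0}$ is not nonnegative, so we cannot simply integrate a lower bound against it. Instead we will use an upper bound together with a lower bound.

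\textbf{Lower bound.} We claim $\textup{M}^\sharp F_N(x)\gtrsim N^{1/2}$ for all $x\in\Ball{1}{0}$. Take $r=1$. Since $\Ball{1}{x}\subseteq\Ball{2}{0}$, we have $F_N(y)=S_N(y_1)$ on the ball. Then
\[ \fint_{\Ball{1}{x}}\Bigl|F_N-\fint F_N\Bigr| \ge \inf_{c\in\R}\fint_{\Ball{1}{x}}|S_N(y_1)-c|\dd y. \]
The marginal density of $y_1$ on $\Ball{1}{x}$ is comparable to a positive constant on the interval $[x_1-1/2,x_1+1/2]$, which has length one. Hence it suffices to show $\inf_c\int_0^1|S_N(t)-c|\dd t\gtrsim N^{1/2}$, using $1$-periodicity.

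This is the main obstacle: we need an $\textup{L}^1$ lower bound for a lacunary sum, uniformly in the constant $c$. We plan to use orthogonality. The sum $S_N$ has mean zero, so $\int_0^1|S_N-c|^2=N/2+c^2\ge N/2$. Lacunary Khintchine/Zygmund inequalities give $\|S_N\|_{\textup{L}^4}^4\lesssim N^2$, and one can check directly that $\|S_N-c\|_4^4\lesssim N^2+c^4$. Hölder's inequality in the form $\|g\|_2^{3}\le\|g\|_1\|g\|_4^{2}$ then yields $\|S_N-c\|_1\gtrsim (N+c^2)^{3/2}/(N+c^2)\gtrsim N^{1/2}$. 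Alternatively, for large $|c|\ge 2N^{1/2}$ we can simply use $\|S_N-c\|_1\ge|c|-\|S_N\|_1\ge |c|-\|S_N\|_2$. The fourth-moment bound itself comes from counting solutions of $\pm2^{a}\pm2^{b}\pm2^{c}\pm2^{d}=0$, of which there are $O(N^2)$.

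\textbf{Upper bound.} We have $\textup{M}^\sharp F_N\le 2\textup{M}F_N$. Moreover $|F_N|\le N\|\psi\|_\infty$ everywhere, and $|F_N|$ decays like $\textup{M}$ of a compactly supported function, so $\textup{M}F_N(x)\lesssim N(1+|x|)^{-n}$. This alone would lose a factor of $N$ against the tails of $h_{\le0}$, so we need better. Instead we use the $\textup{L}^2$ bound $\|\textup{M}^\sharp F_N\|_{\textup{L}^2}\lesssim\|F_N\|_{\textup{L}^2}\lesssim N^{1/2}$.

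\textbf{Combination.} To avoid the sign problem, we will not use $h_{\le0}$ directly. By the elementary embedding \eqref{eq:eemb} in reverse thinking, we note that $\|G\|_{\calB}$ controls $G\ast\phi$ for any Schwartz $\phi$ whose Fourier transform is supported in $|\xi|\le 1$, since $\phi=\phi\ast h_{\le0}$, giving $\|G\ast\phi\|_\infty\le\|\phi\|_1\|\PP_{\!\le0}G\|_\infty$. We choose $\phi=|\eta|^2$ with $\widehat\eta$ supported in a small ball; then $\phi\ge0$, $\widehat\phi$ is supported in $|\xi|\le1$, and $\phi\gtrsim1$ on $\Ball{1}{0}$. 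Consequently
\[ (\textup{M}^\sharp F_N\ast\phi)(0)\ge\int_{\Ball{1}{0}}\textup{M}^\sharp F_N\,\phi\gtrsim N^{1/2}, \]
and therefore $\|\textup{M}^\sharp F_N\|_{\calB}\gtrsim\|\PP_{\!\le0}\textup{M}^\sharp F_N\|_\infty\gtrsim N^{1/2}$. With this positivity trick the upper bound is not even needed.
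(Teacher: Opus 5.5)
Your proposal is correct, but it takes a different route from the paper in both steps.

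For the oscillation lower bound, the paper cites Zygmund's theorem $\int_0^1|S_N|\sim N^{1/2}$. It then removes the subtracted average $a_x$ using the oddness and periodicity of $S_N$, namely $\int_{-1/2}^{1/2}|S_N-a_x|\ge\int_0^1|S_N|$. You instead prove $\inf_c\int_0^1|S_N-c|\gtrsim N^{1/2}$ directly from three facts: $\|g\|_2^3\le\|g\|_1\|g\|_4^2$, the identity $\|S_N-c\|_2^2=N/2+c^2$, and the $O(N^2)$ count of additive quadruples among $\pm2^k$. This is self-contained (essentially the standard proof of Zygmund's inequality with the constant built in) and does not use that $S_N$ is odd.

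For extracting the $\calB$ norm, the paper tests against a compactly supported bump $\rho\ge0$. It therefore has to prove $\|\rho\ast g\|_{\textup{L}^\infty}\lesssim\|g\|_{\calB}$ by summing over all Littlewood--Paley pieces, using the mean zero of $\widetilde h_j$ to get decay $2^{-j}$. That is a reusable estimate valid for any smooth compactly supported $\rho$. You test against $\phi=|\eta|^2$ with $\widehat\phi$ supported in $\{|\xi|\le1\}$, so that $\phi=\phi\ast h_{\le0}$. Only $\|\PP_{\!\le0}\textup{M}^\sharp F_N\|_{\textup{L}^\infty}$ is then used, which pins the growth on the low-frequency piece alone and avoids any summation. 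The price is that $\phi$ is not compactly supported. Since $\phi\ge0$ and $\textup{M}^\sharp F_N\ge0$, the contribution outside $\Ball{1}{0}$ can simply be dropped, as you do.

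You should make explicit why $\phi\gtrsim1$ on $\Ball{1}{0}$, since this fails for a general $\widehat\eta$ with small support. Take $\widehat\eta\ge0$, not identically zero, supported in $\Ball{1/8}{0}$. Then for $|x|\le1$ one has $\textup{Re}\,\eta(x)=\int\widehat\eta(\xi)\cos(2\pi x\cdot\xi)\dd\xi\ge2^{-1/2}\int\widehat\eta$, while $\widehat\phi$ is supported in $\Ball{1/4}{0}$. As you note yourself, your ``upper bound'' paragraph is unnecessary and can be deleted.
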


\begin{proof}
We first claim that
\begin{equation}\label{eq:Msharp_large2}
(\textup{M}^{\sharp}F_N)(x)\gtrsim N^{1/2}
\quad\text{for all }x\in\Ball{1}{0}.
\end{equation}
Fix $x=(x_1,\ldots,x_n)\in\Ball{1}{0}$ and let $a_x$ be the average of $F_N$ on $\Ball{1}{x}$. Then, by the choice of $\psi$,
\[ (\textup{M}^\sharp F_N)(x_1,x_2,\ldots,x_n)
\ge \fint_{\Ball{1}{x}} |F_N(y)-a_x| \dd y 
\gtrsim \int_{x_1-1/2}^{x_1+1/2} |S_N(t)-a_x| \dd t. \]
The last quantity is bounded from below by $\gtrsim N^{1/2}$ as follows. 
By the properties of lacunary Fourier series \cite[Chapter~5,~Theorem~8.20]{Zygmund2002},
\[ \int_{0}^{1} |S_{N}(t)| \dd t\sim N^{1/2}. \]
Also, by the fact that $S_N$ is odd and $1$-periodic,
\begin{align*}
\int_{-1/2}^{1/2} |S_N(t)-a_x| \dd t
& = \int_0^{1/2} \bigl( \big|S_N(t)-a_x\big| + \big|-S_N(t)-a_x\big| \bigr) \dd t \\
& \ge 2\int_0^{1/2} |S_N(t)| \dd t = \int_0^1 |S_N(t)| \dd t. 
\end{align*}
These prove \eqref{eq:Msharp_large2}.

Next, choose a nonnegative $\rho\in \textup{C}_c^\infty(\Ball{1}{0})$ with $\int_{\R^n}\rho=1$. We always have
\begin{equation}\label{eq:just_the_rho}
\|\rho \ast g\|_{\textup{L}^{\infty}(\R^n)} \lesssim_\rho \|g\|_{\calB} 
\end{equation}
for $g\in B$. For the simple proof one uses the Littlewood--Paley decomposition of $g$ and \eqref{eq:DDisD} to write, similarly as in Proposition~\ref{prop:positive},
\[ \rho\ast g = \rho\ast \PP_{\!\le0}g + \sum_{j=1}^\infty (\rho\ast\widetilde{h}_j) \ast \PP_j g. \]
The low-frequency term is bounded in $\|\cdot\|_{\textup{L}^\infty(\R^n)}$ by $\|\rho\|_{\textup{L}^1(\R^n)} \|\PP_{\!\le0}g\|_{\textup{L}^{\infty}(\R^n)}$. Also observe
\[ \|\rho\ast\widetilde{h}_j\|_{\textup{L}^1(\R^n)}
\le \|\nabla\rho\|_{\textup{L}^1(\R^n)} \int_{\R^n} |y|\,|\widetilde{h}_j(y)| \dd y \stackrel{\eqref{eq:htildeLP}}{\lesssim_\rho} 2^{-j}, \]
because $\widetilde{h}_j$ has mean $0$. 
The resulting geometric sum gives the desired estimate.

Now, Estimate \eqref{eq:Msharp_large2} gives
\[ (\rho\ast \textup{M}^\sharp F_N)(0) = \int_{\Ball{1}{0}} \rho(y)(\textup{M}^\sharp F_N)(-y)\dd y \gtrsim N^{1/2}. \]
Applying \eqref{eq:just_the_rho} with $g=\textup{M}^\sharp F_N$ we then furthermore get
\[  N^{1/2}\lesssim \|\rho\ast \textup{M}^\sharp F_N\|_{\textup{L}^\infty(\R^n)} \stackrel{\eqref{eq:just_the_rho}}{\lesssim_\rho} \|\textup{M}^\sharp F_N\|_{\calB}, \]
which is \eqref{eq:Msharp_large1}.
\end{proof}

Our actual counterexample will be a sequence of appropriate dilates of the functions $F_N$.
For an integer $m$, write $\Dil_m f$ for the $\textup{L}^\infty$-normalized dyadic dilate of $f$,
\[ (\Dil_m f)(x):=f(2^m x). \]

\begin{lemma}\label{lm:dilations}
For every integer $m\ge 0$ we have
\begin{equation}\label{eq:Dm-positive-Besov}
\|\Dil_m f\|_{\calB}\lesssim \|f\|_{\calB},
\end{equation}
and
\begin{equation}\label{eq:Dm-negative-Besov}
\|\Dil_{-m} f\|_{\calB}\lesssim (m+1)\|f\|_{\calB}.
\end{equation}
\end{lemma}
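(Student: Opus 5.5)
We will work directly with the Fourier-side description of $\Dil_m$. Since $\widehat{\Dil_m f}(\xi)=2^{-mn}\widehat f(2^{-m}\xi)$, we have the commutation rule
\[ \PP_j\Dil_m f = \Dil_m\bigl(f\ast (h_j)^{(m)}\bigr), \qquad \widehat{(h_j)^{(m)}}(\xi)=\widehat{h_j}(2^{m}\xi). \]
For $j\ge1$ and $j-m\ge1$ this is just $\widehat{h_{j-m}}$, so $\PP_j\Dil_m f=\Dil_m\PP_{j-m}f$, and when $j-m=0$ the multiplier is $\varphi(|\xi|)-\varphi(2|\xi|)$. Also $\Dil_m$ preserves $\textup{L}^\infty$ norms. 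The plan is to express every frequency piece of the dilate through the pieces of $f$, with $\textup{L}^1$-bounded convolution kernels, and then count how many pieces occur.

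\emph{Case $m\ge0$ (compression, frequencies move up).} For $j\ge m+1$ we get $\|\PP_j\Dil_m f\|_\infty=\|\PP_{j-m}f\|_\infty\le\|f\|_{\calB}$ directly. For $1\le j\le m$, the multiplier $\widehat{h_j}(2^m\xi)$ is supported in $|\xi|\le 2^{j+1-m}\le2$. For the low part, $\widehat{h_{\le0}}(2^{m}\xi)=\varphi(2^m|\xi|)$ is also supported in $|\xi|\le 2$. In both situations the multiplier is supported where $\widehat{h_{\le0}}\equiv1$ fails only on $1\le|\xi|\le2$, so we insert a fattened low-pass: write $f=\PP_{\le0}f+\PP_1f$ on that frequency region, i.e.\ the multiplier equals itself times $(\widehat{h_{\le0}}+\widehat{h_1})$, which is $\equiv1$ on $|\xi|\le2$ (indeed $\widehat{h_{\le0}}+\widehat{h_1}=\varphi(|\xi|/2)$). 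The multipliers $\widehat{h_j}(2^m\cdot)$ and $\varphi(2^m|\cdot|)$ are $\textup{L}^1$-dilates of fixed Schwartz kernels, hence have uniformly bounded $\textup{L}^1$ kernel norm. Thus each such piece is bounded by $\lesssim\|\PP_{\le0}f\|_\infty+\|\PP_1f\|_\infty\lesssim\|f\|_{\calB}$, uniformly in $j$ and $m$, which gives \eqref{eq:Dm-positive-Besov}.

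\emph{Case $-m$ with $m\ge0$ (stretching, frequencies move down).} Now $\PP_j\Dil_{-m}f=\Dil_{-m}(f\ast k)$ with $\widehat k(\xi)=\widehat{h_j}(2^{-m}\xi)$, which is just $\widehat{h_{j+m}}(\xi)$, so for $j\ge1$ we get $\|\PP_j\Dil_{-m}f\|_\infty=\|\PP_{j+m}f\|_\infty\le\|f\|_{\calB}$. The only loss is in the low part: $\widehat{h_{\le0}}(2^{-m}\xi)=\varphi(|\xi|/2^m)=\widehat{h_{\le0}}(\xi)+\sum_{i=1}^{m}\widehat{h_i}(\xi)$ by telescoping, so
\[ \PP_{\le0}\Dil_{-m}f=\Dil_{-m}\Bigl(\PP_{\le0}f+\sum_{i=1}^m\PP_if\Bigr), \]
whose $\textup{L}^\infty$ norm is at most $(m+1)\|f\|_{\calB}$. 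This yields \eqref{eq:Dm-negative-Besov}.

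The main point requiring care is the bookkeeping of the low-frequency/boundary indices in the case $m\ge0$. There one must check that the dilated multipliers are supported in a region covered by $\PP_{\le0}+\PP_1$, and that their kernels have $\textup{L}^1$ norms bounded independently of $m$. The latter holds by dilation invariance of the $\textup{L}^1$ norm. Everything else is exact telescoping of $\varphi$.
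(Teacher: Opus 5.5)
Your proposal is correct and follows essentially the same route as the paper: you write $\PP_j\Dil_{\pm m}$ as $\Dil_{\pm m}$ composed with a dilated Littlewood--Paley multiplier, and in the positive case you bound the low-index pieces by the $\textup{L}^1$-bounded dilated kernels, while in the negative case the $(m+1)$ loss comes from the telescoping identity $\varphi(|\xi|/2^m)=\widehat{h_{\le0}}(\xi)+\sum_{i=1}^m\widehat{h_i}(\xi)$. The only cosmetic difference is that you absorb every index $1\le j\le m$, and the $\PP_{\!\le0}$ term, using $\PP_{\!\le0}+\PP_1$, whereas the paper uses $\PP_{\!\le0}$ alone for $j\le m-1$ and adds $\PP_1$ only at $j=m$.
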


\begin{proof}
For the purpose of this proof extend the definitions
\begin{align*}
\widehat{h_{\le j}}(\xi) & := \varphi\Bigl(\frac{|\xi|}{2^j}\Bigr), \quad \xi\in\R^n,\\
\widehat{h_j}(\xi) & := \varphi\Bigl(\frac{|\xi|}{2^j}\Bigr) - \varphi\Bigl(\frac{|\xi|}{2^{j-1}}\Bigr), \quad \xi\in\R^n,\\
\PP_{\!\le j} f & := f \ast h_{\le j},\\
\PP_j f & := f \ast h_j
\end{align*}
to every $j\in\Z$. The operators $\PP_{\!\le j}$ and $\PP_j$ are clearly uniformly bounded on $\textup{L}^\infty(\R^n)$.

The case $m=0$ is trivial, so assume $m\ge1$.
We first prove \eqref{eq:Dm-positive-Besov} using the identities
\begin{align*} 
\PP_j \Dil_m f & = 
\begin{cases}
\Dil_m\PP_{j-m} \PP_{\!\le0} f & \text{for } 1\le j\le m-1,\\
\Dil_m\PP_{j-m}(\PP_{\!\le0}+\PP_1)f & \text{for } j=m,\\
\Dil_m\PP_{j-m}f & \text{for } j\ge m+1,
\end{cases} \\
\PP_{\!\le0}\Dil_m f & = \Dil_m\PP_{\!\le -m} \PP_{\!\le0} f.
\end{align*}
which are respective consequences of the following immediate equalities on the Fourier side:
\begin{align*}
& \widehat{h_j}(\xi) \frac{1}{2^{mn}} \widehat{f}\Bigl(\frac{\xi}{2^m}\Bigr)
= \frac{1}{2^{mn}} \widehat{f}\Bigl(\frac{\xi}{2^m}\Bigr) \widehat{h_{j-m}}\Bigl(\frac{\xi}{2^m}\Bigr) \\
& = \frac{1}{2^{mn}} \widehat{f}\Bigl(\frac{\xi}{2^m}\Bigr) \widehat{h_{j-m}}\Bigl(\frac{\xi}{2^m}\Bigr) \times 
\begin{cases}
\widehat{h_{\le0}}(\xi/2^m) & \text{for } 1\le j\le m-1,\\
(\widehat{h_{\le0}}(\xi/2^m)+\widehat{h_{1}}(\xi/2^m)) & \text{for } j=m,\\
1 & \text{for } j\ge m+1,
\end{cases} \\
& \widehat{h_{\le0}}(\xi) \frac{1}{2^{mn}} \widehat{f}\Bigl(\frac{\xi}{2^m}\Bigr) 
= \frac{1}{2^{mn}} \widehat{h_{\le -m}}\Bigl(\frac{\xi}{2^m}\Bigr) \widehat{f}\Bigl(\frac{\xi}{2^m}\Bigr)
= \frac{1}{2^{mn}} \widehat{h_{\le -m}}\Bigl(\frac{\xi}{2^m}\Bigr) \widehat{h_{\le 0}}\Bigl(\frac{\xi}{2^m}\Bigr) \widehat{f}\Bigl(\frac{\xi}{2^m}\Bigr).
\end{align*}
The aforementioned identities give
\begin{align*} 
\|\PP_j \Dil_m f\|_{\textup{L}^{\infty}(\R^n)} & \lesssim 
\begin{cases}
\|\PP_{\!\le0} f\|_{\textup{L}^{\infty}(\R^n)} & \text{for } 1\le j\le m-1,\\
\|\PP_{\!\le0}f\|_{\textup{L}^{\infty}(\R^n)} + \|\PP_1 f\|_{\textup{L}^{\infty}(\R^n)} & \text{for } j=m,\\
\|\PP_{j-m}f\|_{\textup{L}^{\infty}(\R^n)} & \text{for } j\ge m+1,
\end{cases} \\
\|\PP_{\!\le0}\Dil_m f\|_{\textup{L}^{\infty}(\R^n)} & \lesssim \|\PP_{\!\le0} f\|_{\textup{L}^{\infty}(\R^n)},
\end{align*}
which implies \eqref{eq:Dm-positive-Besov}.

Estimate \eqref{eq:Dm-negative-Besov} is even more direct. 
This time we have the identities
\begin{align*}
\PP_j\Dil_{-m}f & = \Dil_{-m}\PP_{j+m}f \quad \text{for } j\ge1, \\
\PP_{\!\le0}\Dil_{-m}f & = \Dil_{-m}\Bigl(\PP_{\!\le0}f+\sum_{j=1}^{m}\PP_j f\Bigr),
\end{align*}
which follow from
\begin{align*}
\widehat{h_{j}}(\xi) 2^{mn} \widehat{f}(2^m\xi) & = 2^{mn} \widehat{h_{j+m}}(2^m\xi) \widehat{f}(2^m\xi) \quad \text{for } j\ge1, \\
\widehat{h_{\le0}}(\xi) 2^{mn} \widehat{f}(2^m\xi) & = 2^{mn} \widehat{h_{\le m}}(2^m\xi) \widehat{f}(2^m\xi)
= 2^{mn} \Bigl( \widehat{h_{\le 0}}(2^m\xi) + \sum_{j=1}^m \widehat{h_{j}}(2^m\xi) \Bigr) \widehat{f}(2^m\xi)
\end{align*}
and imply
\begin{align*}
\|\PP_j\Dil_{-m}f\|_{\textup{L}^{\infty}(\R^n)} & \lesssim \|\PP_{j+m}f\|_{\textup{L}^{\infty}(\R^n)} \quad \text{for } j\ge1, \\
\|\PP_{\!\le0}\Dil_{-m}f\|_{\textup{L}^{\infty}(\R^n)} & \lesssim \Bigl(\|\PP_{\!\le0}f\|_{\textup{L}^{\infty}(\R^n)} + \sum_{j=1}^{m} \|\PP_j f\|_{\textup{L}^{\infty}(\R^n)} \Bigr).
\end{align*}
These establish \eqref{eq:Dm-negative-Besov}.
\end{proof}

Now we are ready to disprove the boundedness of $\textup{M}^\sharp$.

\begin{proof}[Proof of the unboundedness part of Theorem~\ref{thm:main}]
The sharp maximal operator commutes with dyadic dilations $\Dil_m$,
\begin{equation}\label{eq:Msharp-dilation-commute}
\textup{M}^\sharp \Dil_m f = \Dil_m \textup{M}^\sharp f
\end{equation}
for $m\in\Z$. Indeed, after the change of variables $y\mapsto 2^{-m}y$, averages over balls $\Ball{r}{x}$ are transformed into averages over balls $\Ball{2^m r}{2^m x}$.

Choose $m_N\in\mathbb N\cup\{0\}$ by $m_N := \lfloor \log_2 N \rfloor$ and define
\[ f_N=\Dil_{m_N}F_N. \]
By
\[ \|F_N\|_{\textup{L}^2(\R^n)} \lesssim \Bigl(\int_{0}^{1} |S_{N}(t)|^2 \dd t\Bigr)^{1/2} \lesssim N^{1/2}, \]
Lemma~\ref{lm:dilations}, and Lemma~\ref{lm:FN_Besov}, we have
\[ \|f_N\|_{\textup{L}^2(\R^n)}=2^{-m_Nn/2}\|F_N\|_{\textup{L}^2(\R^n)} \lesssim N^{-n/2+1/2} \lesssim 1 \]
and
\[ \|f_N\|_{\calB}\lesssim\|F_N\|_{\calB} \stackrel{\eqref{eq:FN_Besov2U}}{\lesssim} 1. \]
Consequently,
\[ \sup_{N\ge1}\|f_N\|_{B}<\infty. \]
On the other hand, using Lemma~\ref{lm:Msharp_large}, dilation invariance, and Lemma~\ref{lm:dilations}, we obtain
\[ N^{1/2} \stackrel{\eqref{eq:Msharp_large1}}{\lesssim} \|\textup{M}^{\sharp}F_N\|_{\calB}
\stackrel{\eqref{eq:Msharp-dilation-commute}}{=} \|\Dil_{-m_N}\textup{M}^{\sharp}f_N\|_{\calB}
\stackrel{\eqref{eq:Dm-negative-Besov}}{\lesssim} (m_N+1)\|\textup{M}^{\sharp}f_N\|_{\calB}. \]
Therefore
\[ \|\textup{M}^{\sharp}f_N\|_{B}
\ge \|\textup{M}^{\sharp}f_N\|_{\calB}
\gtrsim \frac{N^{1/2}}{\log_2 N + 1}\rightarrow\infty \]
as $N\to\infty$.
\end{proof}


\section*{Declaration of AI usage}
OpenAI's \emph{ChatGPT} 5.4 Pro and \emph{ChatGPT} 5.5 Pro were used to investigate useful examples of Banach spaces, 
to the extent specified toward the end of the introduction. \emph{Overleaf AI assistant} was used to suggest additional references. The ideas, the results, the proofs, the bibliography, and the writing of the manuscript are entirely the author's work.


\section*{Acknowledgments and funding}
The author is grateful to Aleksandar Bulj for useful discussions.

This work was supported in part by the Croatian Science Foundation under the project HRZZ-IP-2022-10-5116 (\emph{FANAP}).
This paper was also supported in part by the European Union -- NextGenerationEU through the National Recovery and Resilience Plan 2021--2026, via an institutional grant of the University of Zagreb Faculty of Science, IK IA 1.1.3, \emph{Impact4Math}.


\bibliographystyle{plainurl}
\bibliography{mazya_maximal_operators}{}

\end{document}